\documentclass{article}
\usepackage{t1enc}
\usepackage{latexsym}
\usepackage{amssymb}
\usepackage{amsmath}
\usepackage{amsthm}
\usepackage[mathscr]{eucal}
\usepackage{graphicx}
\usepackage{pictexwd}

\setlength{\oddsidemargin}{-0.5cm}
\setlength{\evensidemargin}{0cm}
\setlength{\textwidth}{17cm}

\newcommand{\bibname}{}

\newtheorem{thm}{Theorem}
\newtheorem{lem}{Lemma}

\newtheorem*{lem*}{Lemma}

\theoremstyle{definition}
\newtheorem{dfn}{Definition}
\newtheorem{rem}{Remark}

\newcommand{\Cob}{\mathscr{N}}

\newcommand{\Sr}[1]{\Sigma^{1_{#1}}}


\newcommand{\ka}[3]{\hat{#1}_{#2}(#3)}
\newcommand{\huN}[2]{\widetilde{\Delta_{#1}}(#2)}
\newcommand{\huM}[2]{\Delta_{#1}(#2)}
\newcommand{\gN}[2][f]{\widetilde{#1_{#2}}}
\newcommand{\Lr}[1][i]{\Lambda_r^{#1}}
\newcommand{\lr}[1][i]{\lambda_r^{#1}}



\newcommand{\R}{\mathbb{R}}


\newcommand{\numeq}[1]{\stackrel{\mbox{\scriptsize (#1)}}{=}}

\newcommand{\immto}{\looparrowright}
\newcommand{\embto}{\hookrightarrow}

\newcommand{\un}[1]{\underline{#1}}
\newcommand{\uun}[1]{\un{\un{#1}}}
\newcommand{\ie}{i.e{.}}
\newcommand{\eg}{e.g{.}}

\title{Singularities of projected immersions revisited\thanks{This research was partially
supported by grant T049449 of the Hungarian Scientific Research Fund. MSC2000:
57R45, 57R42}}
\author{G\'abor Lippner}

\sloppy
\begin{document}
\maketitle

\begin{abstract}
Sz\H ucs proved in~\cite{Sz1} that the $r$-tuple-point manifold of a generic
immersion is cobordant to the $\Sr{r-1}$-point manifold of its generic
projection. Here we slightly extend this by showing that the natural mappings
of these manifolds are bordant to each other. The main novelty of our approach
is that we construct the bordism explicitly.

\end{abstract}

\section{Introduction}
There is a surprising relation between the multiple-points of an immersion $g :
M \immto N \times \R$ and the singularities of its projection $f: M \to N$ that
was found by Sz\H ucs in~\cite{Sz1} (see also~\cite{Sz2}).  Namely he showed
that if $N$ is a Euclidean space then the $r+1$-tuple-points of $g$ are
cobordant to the $\Sr{r}$ points of $f$. The proof of this result involved
computing the characteristic numbers of the two manifolds and observing that
they coincide.

It is very natural to ask whether this cobordism can be ``seen'' in an
explicit way hidden in the geometry of $f$, not just as mere luck that
all the characteristic numbers coincide.

We shall answer this question in the affirmative by constructing a
cobordism that connects the two manifolds. This allows us to slightly
extend the original theorem: instead of cobordism of manifolds we
obtain singular bordism of maps, and we prove the theorem for any
smooth target manifold $N$. (The notation and the necessary
definitions are given in the next section.)

\begin{thm}
Let $f: M^n \to N^{n+k}$ be a prim map, and let $g: M \immto N \times \R$ be
its lift to an immersion. Then for any $r \geq 1$ we have $g_{r} \sim
(\Sr{r-1}(f) \embto M)$, that is they represent the same element in the singular bordism
group $\Cob_*(M)$.

If $M$ and $N$ are oriented and the codimension $k$ is odd, then
$g_{r} \sim_{SO} (\Sr{r-1}(f) \embto M)$, that is they represent the
same element in the singular oriented bordism group $\Omega(M)$.
\end{thm}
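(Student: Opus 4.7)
The plan is to realise the bordism explicitly as (a blow-up of) the locus
\[
W = \bigl\{(p_1,\dots,p_r)\in M^r : f(p_1)=\dots=f(p_r),\ h(p_i)-h(p_{i-1})=h(p_{i+1})-h(p_i)\geq 0\bigr\}\subset M^r,
\]
where $g=(f,h)$ with $h\colon M\to\R$ the height coordinate of the immersion: the $p_i$ lie in a common fibre of $f$ and their heights form a non-decreasing arithmetic progression with common difference $\delta\geq 0$. Away from the diagonals of $M^r$ and for $\delta>0$, a transversality argument shows $W$ is smooth of dimension $n-(r-1)(k+1)+1$, the correct bordism dimension, and the projection $(p_1,\dots,p_r)\mapsto p_1$ is a natural map $W\to M$.

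The boundary of $W$ arises in two ways. First, at $\delta=0$ the defining condition collapses to $g(p_1)=\dots=g(p_r)$, so the tuple represents an $r$-tuple point of $g$; after the obvious bookkeeping for orderings of $p_2,\dots,p_r$, this face is identified with $g_r\embto M$. Second, along the diagonal strata $\{p_i=p_j\}\subset M^r$ the space $W$ is singular, and I would resolve this by performing the iterated Fulton--MacPherson blow-up of the diagonals of $M^r$ and taking the proper transform of $W$, obtaining a smooth manifold-with-boundary $\widetilde W$ whose additional boundary component is the exceptional divisor lying over the deepest diagonal $\{p_1=\dots=p_r\}$.

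The heart of the argument is a local computation identifying this exceptional divisor with $\Sr{r-1}(f)$. In Morin's normal form at an $\Sr{r-1}$ point, the successive kernels of $df$ form a canonical flag, and after rescaling by $\delta$ the arithmetic-progression condition translates into a compatibility condition along this flag. The primness of $g$ --- \ie\ that $dh$ is non-trivial on each successive kernel of $df$ --- is what forces the projective fibre at each blow-up stage to collapse to a single point: without primness, one would generically see a positive-dimensional projective variety. Consequently the exceptional fibre over $p\in M$ is empty unless $p\in\Sr{r-1}(f)$, in which case it is a single point, giving a canonical identification of that boundary component with $\Sr{r-1}(f)\embto M$. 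The $r=2$ case is elementary: the unique null direction of $df$ at a fold point is, by primness, transverse to $dh$, so the diagonal blow-up of $W\subset M\times M$ has exceptional divisor equal to $\Sr{1}(f)$.

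For the oriented refinement with $k$ odd, one verifies that the fibre product, the arithmetic-progression slice, the ordering quotient, and each iterated blow-up all carry natural orientations, and that the signs introduced by the symmetric-group identifications and the blow-ups combine consistently --- this is precisely where the parity of $k$ enters --- to give a well-defined class in $\Omega_*(M)$. The principal obstacle is the inductive local computation identified above: one must match each successive blow-up of $W$ along a deeper diagonal with a successive drop in the Boardman kernel flag along $\Sr{1}\supset\Sr{2}\supset\dots\supset\Sr{r-1}$, and verify at each stage that primness collapses the intermediate projective fibre to a point. Carrying out this induction with Morin's explicit normal form is where the real work of the proof lies.
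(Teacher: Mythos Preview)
Your single-bordism idea is attractive, and for $r=2$ it coincides with the paper's construction. For $r\geq 3$, however, two genuine gaps appear.

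\emph{The $\delta=0$ face is not $g_r$.} At $\delta=0$ the condition becomes $g(p_1)=\dots=g(p_r)$ with the tuple still \emph{ordered}. Since the arithmetic-progression constraint breaks the $S_{r-1}$-symmetry in $p_2,\dots,p_r$, there is no group action on $W$ by which to quotient, and this face is the ordered multiple-point manifold $\ka{M}{r}{g}$, the $(r-1)!$-fold cover of the domain $\huM{r}{g}$ of $g_r$. A nontrivial finite cover is not, in general, bordant over $M$ to its base, so ``obvious bookkeeping'' does not close this.

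\emph{Partial diagonals contribute.} The closure of the interior $\{\delta>0\}$ of $W$ meets not only the small diagonal but every partial one. Already for $r=3$ one may let $\delta\to 0$ with $p_1,p_2\to p\in\Sr{1}(f)$ while $p_3\to q\neq p$ with $g(p)=g(q)$; the limit $(p,p,q)$ lies on $\{p_1=p_2\}$ but \emph{not} in the closure of the $\delta=0$ face (which, being $\ka{M}{3}{g}$, is already closed and disjoint from the diagonals). After your Fulton--MacPherson resolution the proper transform $\widetilde W$ therefore acquires extra boundary faces over these strata, and $\partial\widetilde W$ is not simply $\ka{M}{r}{g}\sqcup\Sr{r-1}(f)$. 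The pieces appearing over the partial diagonals are exactly the ``mixed'' manifolds $\Lr$ --- an $i$-fold point of $g$ whose first member lies in $\Sr{r-i}(f)$ --- so your bordism relation only yields an identity involving \emph{all} of these at once, not the desired $g_r\sim(\Sr{r-1}(f)\embto M)$.

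The paper sidesteps both problems by breaking the bordism into $r-1$ steps through the chain $\Lr[1]=\Sr{r-1}(f),\ \Lr[2],\ \dots,\ \Lr[r]=\huM{r}{g}$. At the $i$-th step only the height of the distinguished point $x_1$ (the one carrying the $\Sr{r-i}$ condition) is released, while $x_2,\dots,x_i$ are kept $g$-coincident; this makes the construction $S_{i-1}$-equivariant, so one may quotient and land exactly on $g_r$ at the final step, and it forces the \emph{only} possible collision to be $x_1$ with some $x_j$, which a normal-form computation shows raises the Thom--Boardman order by one and produces the adjacent $\Lambda_r$. Your observation that primness collapses the blow-up fibre to a point is precisely the content of that local computation --- but the global architecture needs the intermediate $\Lr$ rather than a single $W$.
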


\section{Definitions and notation}

Consider a proper, generic immersion $g : M^n \to N^{n+k}$ of a closed
smooth manifold $M$ to a smooth manifold $N$. The \textit{$r$-fold
points} of $g$ are those points in $N$ whose preimage consists exactly
of $r$ different points. We shall denote this set $N_r$. This is not
always a closed set in $N$. Its closure $\bar{N}_r$ consists of those
points that have at least $r$ distinct preimages. Put $M_r =
g^{-1}(N_r)$, this is the set of $r$-tuple points of $g$ in the source
manifold. Its closure is denoted by $\bar{M}_r$.

The sets $M_r$ and $N_r$ are generally not submanifolds of $M$ and $N$
but they are images of (non-generic) immersions of (possibly open)
manifolds. Here we recall a well-known construction (see
\eg~\cite{Ro}) to fix the notation: Let \[ \ka{M}{r}{g} =
\{(x_1,\dots,x_r) \in M^{(r)} : g(x_1)=\cdots=g(x_r), (i \neq j)
\Rightarrow (x_i \neq x_j)\}.\]

The symmetric group $S_r$ acts on this set freely in an obvious
way. Let $[x_1,\dots,x_r]$ denote the equivalence class of
$(x_1,\dots,x_r)$. On the other hand $S_{r-1}$ also acts freely on the
last $r-1$ coordinates. Here the equivalence class of
$(x_1,\dots,x_r)$ is denoted by $(x_1,[x_2,\dots,x_r])$.

\begin{dfn}
The sets of equivalence classes are denoted by
\begin{align*} \huN{r}{g} &= \ka{M}{r}{g}/S_r \\
               \huM{r}{g} &= \ka{M}{r}{g}/S_{r-1}.
\end{align*}
There are obvious mappings
\begin{align*} \gN[g]{r} : \huN{r}{g} &\to N & \gN[g]{r}([x_1,\dots,x_r]) &:=
  g(x_1)\\ g_r : \huM{r}{g} &\to M & g_r(x_1,[x_2,\dots,x_r]) &:=
  x_1\\ s_r : \huM{r}{g} &\to \huN{r}{g} & s_r(x_1,[x_2,\dots,x_r])
  &:= [x_1,\dots,x_r].
\end{align*}
\end{dfn}

 The images of $\gN[g]{r}$ and $g_r$ are clearly $\bar{N}_r$ and $\bar{M}_r$ and
they are bijective to the points that have multiplicity exactly
$r$. On the other hand $s_r$ is clearly an $r$-sheeted covering.

The sets $\huN{r}{g}$ and $\huM{r}{g}$ are called the $r$-fold
multiple-point manifolds of $g$ in the target and source
respectively. They are indeed manifolds.  To see this we need the
notion of the fat and narrow diagonals. Let $V$ be a manifold and
$V^{(r)}$ its $r$-fold Cartesian product. Then let $\delta_r(V) =
\{(x,x,\dots,x) \in V^{(r)} | x \in V\}$ and $\Delta_r(V) =
\{(x_1,x_2,\dots,x_r) \in V^{(r)} | \exists i \neq j, x_i = x_j \}$
denote the narrow and the fat diagonals respectively.  Consider the
$r$-fold product $g^{(r)}: M^{(r)} \to N^{(r)}$.  Clearly
\[\ka{M}{r}{g} = (g^{(r)})^{-1}(\delta_r(N))
\setminus \Delta_r(M).\] Since $g$ is a generic immersion, $g^{(r)}$
is transverse to $\delta_r(N)$ and thus $\ka{M}{r}{g}$ is a closed
manifold of dimension $n-(r-1)k$. The symmetric group $S_r$ acts on it
freely, so after factoring out with the group actions of $S_r$ or
$S_{r-1}$ we still get manifolds.

\begin{rem} If $M$ and $N$ are oriented and the codimension is even, then the
multiple-point manifolds can be given a natural orientation. If $k$ is odd
however, then the action of $S_r$ contains orientation reversing elements, thus
the factor manifolds will have no, or at least no preferred orientation.
\end{rem}

\begin{dfn}
Given a smooth map $f : M \to N$ where $\dim M \leq \dim N$, a point
$x \in M$ is said to be a $\Sigma^{i}$ point if the corank (\ie the
dimension of the kernel) of $df_x : T_xM \to T_{f(x)}N$ is $i$. The
closure of the set of such points will be denoted by
$\Sigma^{i}(f)$. If $i_1 \geq i_2$ then we can define
$\Sigma^{i_1,i_2}(f) = \Sigma^{i_2}(f|_{\Sigma^{i_1}(f)})$. This
method can be continued recursively to give the definition of
$\Sigma^{(i_1,i_2,\dots,i_r)}$ points, where $i_1 \geq i_2 \geq \dots
\geq i_r$. This classification of singular points is called the
Thom-Boardman type. For details see e{.}  g{.}~\cite{AGLV}.
\end{dfn}

\begin{dfn}~\label{sr}
A map $f: M \to N$ is said to be a \textit{Morin} map if it has no
$\Sigma^2$ points. The singularities of such maps are classified by
their Thom-Boardman type, which can only be
$\Sigma^{\overbrace{(1,1,\dots,1)}^r} = \Sr{r}$ for some $r \geq
0$. (In the notation of~\cite{AGLV} this is $A_r$.) The set
$\Sr{r}(f)$ is actually a submanifold of $M$.
\end{dfn}

\begin{dfn}
A generic map $f: M \to N$ is called prim (\textit{pr}ojected
\textit{im}mersion) if it has a specified lifting to a generic immersion, $g :
M \immto N \times \R$ (\ie $f=\pi \circ g$, where $\pi:N\times \R \to
N$ is the projection). This lifting $g$ has to be given up to regular homotopy.
Such a map is necessarily a Morin map (i{.}e{.} its differential has corank at
most one at any point), and so its singularities are classified by their
Thom-Boardman type.
\end{dfn}

\begin{dfn}
The fat diagonal of $\Sr{r}(f) \times M^{(i-1)}$ can be defined analogously to
$\Delta_i(M)$, since $\Sr{r}(f) \subset M$ is a submanifold. Let us denote
\[ \Delta_i^r(M) = \{ (x_1,x_2,\dots,x_i) \in \Sr{r}(f)
\times M^{(i-1)} : \exists j\neq l, x_j=x_l\}.\]
\end{dfn}

\begin{rem}
For any manifold $M$ we shall denote its cobordism class by $[M] \in
\Cob_*$ and for a map $f: M \to N$ we shall denote its singular bordism
class by $[f] \in \Cob_*(N)$. The cobordism relation for both manifolds
and maps will be denoted by a $\sim$. If $M$ is oriented then the same
notation will be used for the corresponding classes in $\Omega_*$ and
$\Omega_*(N)$ respectively.
\end{rem}

\section{Proof of the theorem}

\subsection{Preparations}~\label{prepsec}

Let us fix a prim map $f: M^n \to N^{n+k}$, its lift $g: M \immto N
\times \R$ and an integer $r \geq 2$ (for $r=1$ the statement is
obvious). We shall introduce auxiliary manifolds and their maps to $M$
which we shall call 'mixed'-point manifolds. For any $1 \leq i \leq r$
let us consider those points in $M$ that are $i$-tuple points of $g$
and at the same time $\Sr{r-i}$ points of $f$.  These points do not
necessarily form a submanifold of $M$, but we can construct their
resolution just like we did for the set of $r$-tuple points of an
immersion: Let us consider the map
\[ G_i := g|_{\Sr{r-i}(f)} \times g \times \dots \times g : \Sr{r-i}(f) \times M
\times \dots \times M \to (N \times \R) \times \dots \times (N \times
\R),\] where we take $i-1$ factors of $M$ on the left, and thus $i$
factors of $(N\times \R)$ on the right. Since $f$ is a generic prim
map and $g$ is its generic lift we have that $G_i$ is transverse to
the narrow diagonal $\delta_i(N\times \R)$ outside of the fat diagonal
$\Delta_i^{r-i}(M)$. Since $g$ is an immersion the set
$\ka{M}{i,r-i}{f} := G_i^{-1}(\delta_i(N\times \R)) \setminus
\Delta_i^{r-i}(M)$ is a closed submanifold in $\Sr{r-i}(f) \times
M^{(i-1)}$. The symmetric group $S_{i-1}$ acts on $\Sr{r-i}(f) \times
M^{(i-1)}$ by permuting the last $i-1$ coordinates. This action
restricted to $\ka{M}{i,r-i}{f}$ is free, so we can factorize and get
the manifold \[\Lr = \ka{M}{i,r-i}{f}/S_{i-1}.\] A point of $\Lr$ can
be referred to as $(x_1,[x_2,\dots,x_i])$ where the $x_j$'s are all
different, $g(x_1) = g(x_2) = \dots = g(x_i)$ and $x_1 \in
\Sr{r-i}(f)$. In this notation the desired resolution \[\lr : \Lr \to
M\] is given by \[ (x_1,[x_2,\dots,x_i]) \mapsto x_1.\] (The maps
$f,g$ are omitted from the notation.) It is easy to see that the
manifold $\Lr$ has dimension \mbox{$n-(r-1)(k+1)$} and in particular
$\Lr[r] = \huM{r}{g}, \lr[r] = g_r$ and $\lr[1] :\Lr[1] \to M$ is the
natural inclusion $\Sr{r-1}(f) \embto M$. Thus the theorem follows
from the following lemma.

\begin{lem}~\label{mainlemma}
$\lr[1] \sim \lr[2] \sim \dots \sim \lr[r]$, \ie~hese maps represent
  the same class in $\Cob_*(N)$.
\end{lem}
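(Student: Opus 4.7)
The plan is to establish the chain of singular bordisms $\lr[1] \sim \lr[2] \sim \cdots \sim \lr[r]$ by constructing each link separately and then composing. For $1 \leq i < r$ the goal is an explicit compact manifold $W_i$ with $\partial W_i = \Lr[i] \sqcup \Lr[i+1]$, together with a map $w_i : W_i \to M$ that restricts to $\lr[i] \sqcup \lr[i+1]$ on the boundary.

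The geometric mechanism behind the cobordism should be the following. The stratum $\Sr{r-i-1}(f)$ contains $\Sr{r-i}(f)$ as a submanifold of codimension $k+1$. A point of $\Lr[i+1]$ has its distinguished coordinate $x_1$ on $\Sr{r-i-1}(f)$ with $i$ further preimages of $g(x_1)$, whereas a point of $\Lr[i]$ has $x_1$ on the deeper stratum $\Sr{r-i}(f)$ with $i-1$ further preimages. As $x_1$ is moved within $\Sr{r-i-1}(f)$ toward $\Sr{r-i}(f)$, genericity of $g$ forces exactly one of the extra preimages, say $x_{i+1}$, to collide with $x_1$ in the limit---precisely the mechanism that raises the Thom--Boardman type from $\Sr{r-i-1}$ to $\Sr{r-i}$. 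After the collision only $i$ branches of $g$ remain through $g(x_1)$, so the limit configuration lies in $\Lr[i]$. Thus $\Lr[i]$ should appear as a ``collision boundary'' of $\Lr[i+1]$, and $W_i$ is the interpolation between the two ends.

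Concretely, I would realize $W_i$ as a real blow-up (or one-parameter resolution) of the closure of $\Lr[i+1]$ along the locus where $x_1 \in \Sr{r-i}(f)$, using a scale parameter $t \in [0,1]$ that records the displacement of $x_{i+1}$ from $x_1$ in the normal direction to $\Sr{r-i}(f)$ inside $\Sr{r-i-1}(f)$. At $t=1$ we recover $\Lr[i+1]$; at $t=0$ the collision parameter identifies the boundary with $\Lr[i]$; and the map $w_i$ is just the projection $(x_1,[x_2,\dots,x_{i+1}],t) \mapsto x_1$. The main technical obstacle is to verify that this resolution actually produces a smooth manifold with the predicted boundary decomposition rather than a singular space. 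This hinges on the standard Morin normal form for the prim map $f$ near a point of $\Sr{r-i}(f) \setminus \Sr{r-i+1}(f)$, which provides an explicit polynomial model for how the branches of $g$ through $g(x_1)$ collide transversally as $x_1$ crosses the lower stratum. Once the local model is pinned down, the manifold structure on $W_i$, the smoothness of $w_i$, the freeness of the residual $S_{i-1}$-action, and---in the oriented case ($k$ odd)---the compatible orientation on $W_i$, all follow by direct calculation in the normal form, and concatenating the $W_i$ yields the full chain.
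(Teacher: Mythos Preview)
Your proposed collision mechanism cannot work as stated. In a configuration $(x_1,[x_2,\dots,x_{i+1}])\in\Lr[i+1]$ all the $x_j$ satisfy $g(x_j)=g(x_1)$, and $g$ is an \emph{immersion}. An immersion is locally an embedding, so two distinct $g$-preimages of the same point can never merge under a continuous deformation: if $x_{i+1}\to x_1$ while $g(x_{i+1})=g(x_1)$ throughout, then eventually $x_{i+1}$ lies in a neighbourhood on which $g$ is injective, forcing $x_{i+1}=x_1$ already. Hence $\Lr[i+1]$ is a \emph{closed} manifold (it is the transverse intersection of the closed multiple-point manifold $\huM{i+1}{g}$ with the closed submanifold $\Sr{r-i-1}(f)$ in the first factor), its closure adds nothing, and there is no ``collision boundary'' to blow up. Moving $x_1$ into $\Sr{r-i}(f)$ simply leaves you with a point of $\Lr[i+1]$ whose first coordinate happens to lie on the deeper stratum; no preimage is lost.

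The construction in the paper exploits the extra $\R$-direction instead. Keep $x_1$ on $\Sr{r-i}(f)$ throughout and require $x_2,\dots,x_i$ to share a common $g$-image \emph{with each other} and to satisfy only $f(x_j)=f(x_1)$, together with the half-space condition that the $\R$-coordinate of $g(x_1)$ be $\geq$ that of $g(x_2)$. This cuts out a manifold $H'$ with boundary $\{s=t\}=\ka{M}{i,r-i}{f}$, whose quotient by $S_{i-1}$ is $\Lr[i]$. Taking the closure of $H'$ in $\Sr{r-i}(f)\times M^{(i-1)}$ produces a second boundary piece inside the fat diagonal, where some $x_j$ ($j\geq 2$) coincides with $x_1$; this \emph{is} possible precisely because $x_j$ and $x_1$ are only required to share $f$-image, not $g$-image. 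The normal-form computation then shows that such a collision forces $x_1\in\Sr{r-i+1}(f)$ and that the closure is smooth there, so after dropping the redundant coordinate and quotienting by $S_{i-1}$ this boundary component is $\Lr[i-1]$. Projection to the first factor gives the bordism $\lr[i]\sim\lr[i-1]$. In short, the collision you are looking for does occur, but between $x_1$ and an $f$-preimage of $f(x_1)$ at a different $g$-height, not a $g$-preimage of $g(x_1)$.
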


The proof consists of two very different ingredients. The first
ingredient is the global construction of the desired cobordisms using
the map $f$. The constructed spaces are easy to describe but they are
not obviously manifolds. The precise proof that they are indeed
manifolds requires detailed study of the map $f$ near its singular
points. Thus the second ingredient is a local computation using normal
forms. This computation is only a technical point so first we give the
proofs omitting the computational details. Then in
section~\ref{computations} we finally show how to carry out the
computations used earlier.

\subsection{Proof of Lemma~\ref{mainlemma}}

Let us again consider the map
\[ G_i := g|_{\Sr{r-i}(f)} \times g \times \dots \times g :
\Sr{r-i}(f) \times M \times \dots \times M \to (N \times \R) \times
\dots \times (N \times \R).\] Let us define
\[ \Delta_i^+ = \{ ((x,s),(x,t),\dots,(x,t)) \in (N \times \R)^{(i)}: s \geq
t\}.\]

Outside of $\Delta_i^{r-i}(M)$ the map $G_i$ is transverse to $\Delta_i^+$ and
$\partial \Delta_i^+ = \delta_i(N\times \R)$, since both $f$ and $g$ are
generic and thus self-transverse.

Let us now define $H' = G_i^{-1}(\Delta_i^+) \setminus \Delta_i^{r-i}(M)$.
Transversality implies that $H'$ is a (not necessarily compact) manifold with
boundary $G_i^{-1}(\delta_i(N\times \R))\setminus \Delta_i^{r-i}(M) =
\ka{M}{i,r-i}{f}$. Let us denote the closure of $H'$ in $\Sr{r-i}(f)\times
M^{(i-1)}$ by $H$. Obviously $H \setminus H' \subset \Delta_i^{r-i}(M)$. We
have seen in section~\ref{prepsec} that $\partial H'$ is a closed manifold
disjoint from the fat diagonal. Thus $\partial H'$ is disjoint from $H
\setminus H' \subset \Delta_i^{r-i}(M)$.

Let us take a point $(x_1,\dots,x_i) \in H\setminus H'$. Then by definition of
$H'$ there exist points $y_j^k \ (k\geq 1, i\geq j\leq 1)$ that fulfill all the
following requirements:
\begin{enumerate}
\item For every $j$ we have $\lim_{k \to \infty}y_j^k = x_j$.
\item $y_1^k \in \Sr{r-i}(f)$.
\item For any fixed $k$ the $y_j^k$'s are all different.
\item $g(y_{j_1}^k) = g(y_{j_2}^k)$ for any $j_1,j_2 \geq 2$.
\item $f(y_{j_1}^k) = f(y_{j_2}^k)$ for any $j_1,j_2 \geq 1$.
\end{enumerate}

Since $g$ is a generic immersion, 3. and 4. imply that $\forall
j>l\geq 2$ we have $x_j \neq x_l$. Then since $H \setminus H' \subset
\Delta_i^{r-i}(M)$ there must be a $j>1$ such that $x_1 = x_j$.  Thus
$y_1^k \to x_1$ and $y_j^k\to x_1$ as well. Furthermore $y_1^k \in
\Sr{r-i}(f)$. Theorem~\ref{lokalisszamolas} in
section~\ref{computations} can be applied and hence $x_1 \in
\Sr{r-i+1}(f)$.

Conversely let us suppose that $x_1 \in \Sr{r-i+1}(f)$ and
$x_2,\dots,x_{i-1}$ are all different from each other and $x_1$ and
$g(x_j)$ is the same for every $1\leq j \leq i-1$. We want to show
that in the neighborhood of $(x_1,x_1,x_2,\dots,x_{i-1})$ the set $H$
is a compact manifold with boundary and $(x_1,x_1,x_2,\dots,x_{i-1})$
is on $\partial H$. First consider the first two factors separately
from the others.
\[ G_2 = g|_{\Sr{r-i}(f)} \times g : \Sr{r-i}(f) \times
M \to (N \times \R)^{(2)}.\] Let us denote $H_2' = G_2^{-1}(\Delta_2^+)$. By
Theorem~\ref{altalanosszamolas} in section~\ref{computations} we know that
locally around $(x_1,x_1)$ its closure $H_2 = \mbox{cl}(H_2')$ is a compact
manifold with boundary $\partial H_2 = \{(u,u) : u \in \Sr{r-i+1}(f)\}$.
Clearly $H$ is locally the complete intersection of $H_2 \subset \Sr{r-i}(f)
\times M$ around $(x_1,x_1)$ and $\ka{M}{i-2}{g} \subset M^{(i-2)}$ around
$(x_2,\dots,x_{i-1})$. Thus the genericity of $f$ and $g$ implies that $H$ is
also locally a compact manifold with boundary $\partial H$ the complete
intersection of $\partial H_2$ and $\ka{M}{i-2}{g}$.

Thus $H$ is a compact manifold. Its boundary consist of two disjoint components
$H \setminus H'$ and $\partial H' = \ka{M}{i,r-i}{f}$. The symmetric group
$S_{i-1}$ acts on $\Sr{r-i}(f)\times M^{(i-1)}$ by permuting the last $i-1$
coordinates. By definition $H'$ is invariant under this action. The above
considerations show that $\partial H'$ and $H\setminus H'$ are also invariant,
and the action is free on each. Thus we can factorize by this action on $H$ and
get that the quotient is again a compact manifold $\hat{H}$ with boundary
$\partial H'/S_{i-1}$ and $(H\setminus H')/S_{i-1}$. By definition $\partial
H'/S_{i-1} = \ka{M}{i,r-i}{f}/S_{i-1} = \Lr[i]$. On the other hand we have seen
that
\[H \setminus H' = \{ (x_1,x_2,\dots,x_i) \in
\Delta_i^{r-i+1}(M)\setminus \Sr{r-i+1}(f)\times \Delta_{i-1}(M) :
g(x_j)=g(x_l)\ (1\leq j<l \leq i)\}.\]
Thus there is a natural map $\phi : (H\setminus H')/S_{i-1} \to \Lr[i+1]$
that is given by $\phi(x_1,[x_2,\dots,x_i]) =
(x_1,[x_2,\dots,x_{j-1},x_{j+1},\dots,x_i])$ when $x_1 = x_j$. This
map is clearly a diffeomorphism. Thus $(H\setminus H')/S_{i-1} =
\Lr[i+1]$.

Finally projecting everything to the first coordinate we get a map
$\hat{H} \to M$ that on the boundary coincides with $\lr[i]$ and
$\lr[i+1]$. Thus $\lr[i] \sim \lr[i+1]$. \qed

\begin{rem} If the codimension $k$ is odd, then the codimension of $g$ is even.
So if $M$ and $N$ are oriented, then $H'$ can be given a natural orientation.
This is preserved by the action of $S_{i-1}$ and so the manifold $\hat{H}$ that
creates the cobordism between $\lr[i]$ and $\lr[i+1]$ is oriented. Thus $\lr[i]
\sim_{SO} \lr[i+1]$ and the oriented part of theorem follows as well.
\end{rem}

\section{Local computations}~\label{computations}
Let us consider a prim map $f : M^n \to N^{n+k}$. Let us write $n =
r(k+1) + z$. Then the $\Sr{r}$-points of $f$ form a $z$-dimensional
submanifold in $M$. Let $x \in \Sr{r}(f) \setminus \Sr{r+1}(f)$. Then
(according to e{.} g{.}~\cite{AGLV}) it is possible to take small
Euclidean neighborhoods of $x$ and $f(x)$ and introduce local
coordinates such that $f$ takes the following local normal form (we
take both $x$ and $f(x)$ to be in the origin):
\begin{eqnarray*}
F : (\R^{r(k+1) +z},0) &\to& (\R^{1+k+(r(k+1)-1)+z},0)\\
 (\un{y}^r,\un{y}^{r-1},\dots,\un{y}^1,\un{s}) &\mapsto&
(p_0(y_0^k),p_1(y_0^k),\dots,p_k(y_0^k),\uun{y},\un{s}),
\end{eqnarray*} where
$\un{y}^j = (y_0^j,y_1^j,\dots,y_k^j) \in \R^{k+1}$ for every $1 \leq j \leq k$
and $p_i(x) = \sum_{j=1}^r y^j_i x^j$ are polynomials. By $\uun{y}$ we denote
the collection of all $y_i^j$ except $y_0^k$, so $\uun{y} \in \R^{r(k+1)-1}$.
Finally $\un{s} = (s_1,\dots,s_z) \in \R^z$. The variable $y_0^k$ is special
and will also be denoted by $t$. Note that $p_0(y_0^k) = p_0(t) = t^{r+1} +
y_0^{r-1}t^{r-1} + \dots + y_0^1 t$ is actually degree $r+1$ in $t$, while for
any $i > 0$ we have $p_i(y_0^k) = p_i(t) = y_i^r t^r + \dots y_i^1 t$ which is
degree $r$ in $t$. We will think of the $p_i$ mostly as polynomials of the
single variable $t$.

\begin{lem}~\label{szigmapontok}
\begin{enumerate}
\item
The point $(t,\uun{y},\un{s})$ is a $\Sr{j}$-point of $F$ if and only
if $p_i'(t) = p_i''(t) = \dots = p_i^{(j)}(t) = 0$ for every $0 \leq i
\leq k$.
\item The set of such
points form a submanifold in $\R^{r(k+1)+z}$ which can be smoothly
parametrized by $\un{s},\un{y}^r,\dots,\un{y}^{j+1}$.
\end{enumerate}
\end{lem}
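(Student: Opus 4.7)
My plan is to prove both parts of the lemma jointly by induction on $j$, working directly from the given normal form.

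For the base case and to set up the induction, I would start by computing $dF$. Since the coordinates $\uun{y}$ and $\un{s}$ are mapped identically to their namesakes in the target, they contribute a full-rank identity block to the Jacobian, so every kernel vector of $dF_x$ must be a scalar multiple of the distinguished direction $\partial/\partial t$. A vector $v\,\partial/\partial t$ lies in $\ker dF_x$ iff $v\,p_i'(t)=0$ for every $0\leq i\leq k$, which proves the $j=1$ case of part (1) and, crucially for the induction, shows that the kernel line along $\Sr{1}(F)$ is always spanned by $\partial/\partial t$.

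For the inductive step of part (1), I would assume the lemma for $j-1$, so that $\Sr{j-1}(F)$ is a submanifold cut out by the equations $p_i^{(l)}(t)=0$ for all $i$ and $1\leq l\leq j-1$. By Definition~\ref{sr}, a point $x\in \Sr{j-1}(F)$ lies in $\Sr{j}(F)$ iff $d(F|_{\Sr{j-1}(F)})_x$ has nontrivial kernel. Since $\ker dF_x=\R\cdot\partial/\partial t$, this occurs iff $\partial/\partial t$ is tangent to $\Sr{j-1}(F)$ at $x$; applying $\partial/\partial t$ to the defining equations turns tangency into $p_i^{(l+1)}(t)=0$ for $1\leq l\leq j-1$, equivalently $p_i^{(j)}(t)=0$ for all $i$. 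Combined with the inductive equations, this produces exactly the claimed defining system for $\Sr{j}(F)$.

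For part (2), I would apply the implicit function theorem to the $(k+1)j$ equations $\{p_i^{(l)}(t)=0:0\leq i\leq k,\ 1\leq l\leq j\}$ at the origin. The key expansion is $p_i^{(l)}(t)=l!\,y_i^l + O(t)$ for $i\geq 1$, and analogously for $i=0$ when $l\leq r-1$; the top derivative $p_0^{(r)}(t)=(r+1)!\,t$ handles the equation $t=0$ at level $j=r$. Hence the Jacobian with respect to the block of coefficient variables $\{y_i^l:0\leq i\leq k,\ 1\leq l\leq j\}$ is block-diagonal in $i$ and diagonal within each block at the origin, with nonzero diagonal entries $l!$, so it is invertible in a neighborhood. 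Solving for these variables realizes $\Sr{j}(F)$ locally as a graph over the remaining coordinates $\un{s},\un{y}^r,\dots,\un{y}^{j+1}$, yielding simultaneously the submanifold property and the stated parametrization.

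The construction is mostly mechanical, and the main technical point will be verifying that the Jacobian of the defining system really has the claimed block-diagonal structure; this amounts to keeping careful track of the distinguished variable $t$ as the evaluation point of the $p_i$'s, separately from the coefficient variables, so that $\partial/\partial t$ acts on each $p_i^{(l)}$ cleanly as $p_i^{(l+1)}$. Once this bookkeeping is in place, both halves of the induction run routinely from the normal form.
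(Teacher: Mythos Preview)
Your proposal is correct and follows essentially the same inductive strategy as the paper: the key observation is that $\ker dF$ is always the $t$-axis, so membership in $\Sr{j}(F)$ reduces to tangency of $\partial/\partial t$ to $\Sr{j-1}(F)$. Your execution is somewhat cleaner --- you differentiate the defining equations directly where the paper computes the same derivative as a limit of difference quotients along a sequence in $\Sr{j-1}(F)$, and you invoke the implicit function theorem for Part~2 where the paper solves the system by explicit triangular back-substitution in the $\un{y}^l$ --- but these are stylistic rather than substantive differences.
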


\begin{proof}
Part 2 easily follows from part 1, since if $j<r$ and
$\un{s},\un{y}^r,\dots,\un{y}^{j+1}$ are fixed, then $p_i^{(j)}(t) =
0$ is a non-degenerate linear equation for $\un{y}^j$. This can be
uniquely solved. Then $p_i^{(j-1)}(t) = 0$ is a non-degenerate linear
equation for $\un{y}^{j-1}$, and so on. Finally if $j=r$ then
obviously the only solution is $y_i^l = 0$ for every $i,l$
independently of $\un{s}$. Thus it suffices to show part 1.

We will proceed by induction on $j$. The initial step $j=1$ is easy to see:
$dF$ is singular if and only if $p_i'(t) = 0$ for every $i$ and in this case
$\ker dF$ is the $t$-axis. Now let us suppose we know the statement for $j-1$
and take a point $x \in \Sr{j}(F)$. Then $x \in \Sr{j-1}(F)$ and $\ker d_x F
\subset T_x\Sr{j-1}(F)$. Then there is a sequence of points $x(i) =
(t(i),\uun{y}(i),\un{s}(i) \in \Sr{j-1}(F)$ such that $x(i) \to x =
(t,\uun{y},\un{s})$, $\frac{t(i)-t}{|x(i)-x|} \to 1$ and
$\frac{|\uun{y}(i)-\uun{y}|}{|x(i)-x|} \to 0$. Let us focus on $p_l$ where $l$
is arbitrary but fixed, and temporarily denote it by $p$. We will also
temporarily include in the notation of $p$ all its hidden variables. Then
\[ p^{(j)}(t,\uun{y}) = \lim_{i \to \infty}
\frac{p^{(j-1)}(t(i),\uun{y})-p^{(j-1)}(t,\uun{y})}{t(i)-t} \numeq{1}
\lim_{i \to \infty}
\frac{p^{(j-1)}(t(i),\uun{y})-p^{(j-1)}(t(i),\uun{y}(i))}{t(i)-t}
\numeq{2} 0.\] Here (1) holds since
$p^{(j-1)}(t,\uun{y})=p^{(j-1)}(t(i),\uun{y}(i))=0$ by the inductive
hypothesis. (2) holds since $p^{(j-1)}$ is a fixed finite sum of
expressions linear in $\uun{y}$ and
$\frac{|\uun{y}(i)-\uun{y}(i)|}{t(i)-t} \to 0$. This argument can be
easily reversed and so the statement is true for $j$. This completes
the induction and thus proves part 1.
\end{proof}

\begin{thm}~\label{lokalisszamolas}
Let $f: M^n \to N^{n+k}$ a generic Morin map. If there exist points
$x_i\neq x_i' \in M; (i\geq 1)$ such that $x_i \to x$, $x_i'\to x$,
$x_i \in \Sr{r}(f)$ and $f(x_i) = f(x_i')$ for every $i$, then $x \in
\Sr{r+1}(f)$.
\end{thm}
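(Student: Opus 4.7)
The plan is to argue by contradiction using the Morin normal form $F$ recalled at the opening of this section. Suppose that $x \notin \Sr{r+1}(f)$. Since $\Sr{r}(f)$ is closed and contains every $x_i$, the limit point $x$ lies in $\Sr{r}(f)$, so in fact $x \in \Sr{r}(f)\setminus\Sr{r+1}(f)$, and the normal form applies in a neighbourhood of $x$. For $i$ large enough both $x_i$ and $x_i'$ lie in this coordinate chart, and the strategy is to use the rigid form of $F$ to rule out the existence of such a pair.

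The next step is to translate the hypotheses into the normal form. Since $F$ acts identically on the $\uun{y}$ and $\un{s}$ blocks, the equality $f(x_i)=f(x_i')$ forces $x_i$ and $x_i'$ to share their $\uun{y}$ and $\un{s}$ coordinates; the remaining content is that the one-variable polynomials $p_0(t),\dots,p_k(t)$ (with coefficients read off from the common $\uun{y}$) take the same values at $t_i$ and $t_i'$, while $x_i\neq x_i'$ forces $t_i\neq t_i'$.

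The heart of the argument is what Lemma~\ref{szigmapontok} says about $x_i\in\Sr{r}(f)$: every derivative $p_j^{(l)}(t_i)$ vanishes for $1\leq l\leq r$. Applied to $p_0$, which is monic of degree exactly $r+1$ in $t$, this collapses the Taylor expansion around $t_i$ to $p_0(t)=p_0(t_i)+(t-t_i)^{r+1}$. The condition $p_0(t_i)=p_0(t_i')$ then reads $(t_i'-t_i)^{r+1}=0$, which contradicts $t_i\neq t_i'$. This closes the argument.

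I expect the main obstacle to be purely notational: one must apply the normal form only on the stratum $\Sr{r}(f)\setminus\Sr{r+1}(f)$, and keep careful track of which coordinates are frozen by $f(x_i)=f(x_i')$ versus which are killed by the $\Sigma$-condition coming from Lemma~\ref{szigmapontok}. The convergence hypothesis itself is used only to place the pair $(x_i,x_i')$ inside a single chart — once there, the polynomial identity on $p_0$ already forbids such a pair, so a single $(x_i,x_i')$ would suffice and no genuine limit step is needed.
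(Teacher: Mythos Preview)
Your argument is correct and follows essentially the same approach as the paper's own proof: reduce to the Morin normal form on $\Sr{r}(f)\setminus\Sr{r+1}(f)$, invoke Lemma~\ref{szigmapontok}, and show no $\Sr{r}$-point in the chart can be a double point of $F$. The only cosmetic difference is that the paper uses part~2 of Lemma~\ref{szigmapontok} to pin down the $\Sr{r}$-locus explicitly as $\{t=0,\ \uun{y}=\uun{0}\}$ and then reads off that $F^{-1}(F(x_i))=\{x_i\}$, whereas you use part~1 (the derivative characterization) to collapse the Taylor expansion of $p_0$; both routes yield the same contradiction.
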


\begin{proof}
It is obvious that $x \in \Sr{r}(f)$. Let us suppose that $x \in
\Sr{r}(f) \setminus \Sr{r+1}(f)$. We can consider $f$ locally around
$x$ and introduce Euclidean neighborhoods as before, denoting the
function in the new coordinate-system by $F$. As $x_i \to x$ and $x_i'
\to x$, these points will fall into the chosen neighborhood with at
most finite exceptions. From Lemma~\ref{szigmapontok} it is obvious
that the only $\Sr{r}$-points of $F$ are those for which $t=0$ and
$\uun{y} = \uun{0}$, and $\un{s}$ is arbitrary. On the other hand if
$F(t,\uun{y},\un{s}) = (0,0,\dots,0,\uun{0},\un{s})$ then obviously
$t=0$ and $\uun{y} = \uun{0}$. So none of the $\Sr{r}$-points of $F$
are double points of $F$ which is contradiction.
\end{proof}

If $f: M^n \to N^{n+k}$ is actually a prim map with lifting $g: M^n
\immto N^{n+k}\times \R$ and $x\in \Sr{r}(f)\setminus \Sr{r+1}(f)$,
then we can take the Euclidean coordinates around $x$ and $f(x)$
introduced at the beginning of this section, and choose a last extra
coordinate around $g(x)$ such that $g$ takes the local form $G(x) =
(F(x),t)$.  Let $j<r$ and let us consider the set
\[ A' = \{ (u,v) \in \R^n \times \R^n : u \in \Sr{j}(F), F(u) = F(v),
t(u)\geq t(v)\}\] and its closure $A = \mbox{cl}(A')$.

\begin{thm}~\label{altalanosszamolas}
The set $A$ is a manifold with boundary $\partial A = \{(u,u): u\in
\Sr{j+1}(F)\}$.
\end{thm}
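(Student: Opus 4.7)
The plan is to reduce the claim to a single application of the implicit function theorem in the local normal form of $F$, after factoring out the trivial diagonal contribution of the equation $F(u)=F(v)$. Away from $\{u=v\}$ the set $A$ is already a smooth manifold of the correct dimension by the genericity assumption (which makes $g|_{\Sr{j}(F)}\times g$ transverse to $\delta_2(N\times\R)$, as noted at the start of the main proof); so the real work is to exhibit the manifold-with-boundary structure in a neighborhood of $\{(u,u):u\in\Sr{j+1}(F)\}$.

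First I would set up coordinates. Since the $\uun{y}$- and $\un{s}$-components of $F$ are identity maps, $F(u)=F(v)$ forces $\uun{y}_u=\uun{y}_v=:\uun{y}$ and $\un{s}_u=\un{s}_v=:\un{s}$, leaving only the scalar equations $p_i(t_1)=p_i(t_2)$, $i=0,\dots,k$, where $t_1:=t(u)$ and $t_2:=t(v)$. By Lemma~\ref{szigmapontok}(2), the condition $u\in\Sr{j}(F)$ lets me use $(\un{s},\un{y}^r,\dots,\un{y}^{j+1})$ as smooth parameters, with $\un{y}^j,\dots,\un{y}^1$ determined as smooth functions of these; adjoining $\tau:=t_1-t_2\geq 0$ yields smooth coordinates on the ambient half-space in which $A$ sits. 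Taylor-expanding $p_i(t_1-\tau)$ at $t_1$ and using $p_i^{(l)}(t_1)=0$ for $1\leq l\leq j$, the first $j$ terms of the expansion vanish, so I can factor
\[ p_i(t_1)-p_i(t_2) \;=\; (-1)^j\,\tau^{j+1}\,R_i(t_1,\tau,\uun{y}), \qquad R_i\big|_{\tau=0}=\frac{p_i^{(j+1)}(t_1)}{(j+1)!}. \]
For $\tau>0$ the system $p_i(t_1)=p_i(t_2)$ is equivalent to $R_0=\cdots=R_k=0$, and the closure of this locus as $\tau\to 0^+$ meets $\{\tau=0\}$ precisely along $\{p_i^{(j+1)}(t_1)=0\}=\{u\in\Sr{j+1}(F)\}$, by Lemma~\ref{szigmapontok}(1).

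The one step that needs genuine computation — and this will be the main obstacle — is verifying that the $(k+1)\times(k+1)$ Jacobian $\bigl(\partial R_i/\partial y_{i'}^{j+1}\bigr)_{i,i'}$ is invertible at $\tau=0$. A direct inspection of the explicit polynomial form of the $p_i$ shows that this Jacobian is diagonal with entries equal to $1$, except in the exceptional case $j+1=r$ (where $y_0^{j+1}=t_1$ is the special variable) in which the top-left entry becomes $r+1$; either way it is nonsingular. Granting this, the implicit function theorem uniquely solves $R_0=\cdots=R_k=0$ for $\un{y}^{j+1}$ as a smooth function of $(\tau,\un{s},\un{y}^r,\dots,\un{y}^{j+2})$ in a neighborhood of $\Sr{j+1}(F)\times\{0\}$, and restricting $\tau\geq 0$ then presents $A$ as a manifold with boundary $\{\tau=0\}=\{(u,u):u\in\Sr{j+1}(F)\}$, exactly as claimed.
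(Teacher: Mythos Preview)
Your plan is correct and follows essentially the same route as the paper: reduce to the scalar system $p_i(t_1)=p_i(t_2)$ on $\Sr{j}(F)$, use the vanishing of $p_i',\dots,p_i^{(j)}$ at $t_1$ to factor out $(t_1-t_2)^{j+1}$, and then solve uniquely for $\un{y}^{j+1}$ in terms of $(\tau,\un{y}^r,\dots,\un{y}^{j+2},\un{s})$, identifying the $\tau=0$ locus with $\Sr{j+1}(F)$ via Lemma~\ref{szigmapontok}.

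The only real difference is packaging. The paper carries out the solving for $\un{y}^{j+1}$ explicitly via the Taylor expansion of each $p_i$ about $t(u)$, and is thereby forced into a case split: for $j<r-1$ one solves a linear equation for each $y_i^{j+1}$, while for $j=r-1$ the variable $y_0^{j+1}=y_0^r=t_1$ is special and the argument instead determines $t(u)$ directly from $t(v)$. Your version absorbs both cases into a single implicit function theorem step by checking that the Jacobian $(\partial R_i/\partial y_{i'}^{j+1})$ is diagonal with nonzero entries (with the $(0,0)$ entry becoming $r+1$ precisely when $j+1=r$); this is a mild but genuine streamlining, and the Jacobian computation you sketch is correct since $p_i^{(j+1)}(t_1)$ depends only on $y_i^l$ with $l\ge j+1$, so the implicit dependence of $\un{y}^j,\dots,\un{y}^1$ on the parameters never enters.
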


\begin{proof} Theorem~\ref{lokalisszamolas} implies that a boundary point
of $A'$ must be in $\Sr{j+1}(F)$. We shall give an explicit smooth
parametrization of $A'$ on an open halfspace, and show that this
extends smoothly and bijectively to a parametrization of $\Sr{j+1}(F)$
on the boundary of the halfspace. It is obvious that the variables
$\un{s}$ play no role whatsoever, so without loss of generality we may
assume that $z = 0$ and omit $\un{s}$ from the further calculations.

The condition $F(u) = F(v)$ obviously implies $\uun{y}(u) =
\uun{y}(v)$, so $(u,v) \in A'$ if and only if $t(u) > t(v)$, and
$p_i'(t(u)) = p_i''(t(u)) = \dots = p_i^{(j)}(t(u)) =
p_i(t(u))-p_i(t(v)) = 0$ for every $i$. (Here we think of $p_i$ as a
polynomial of one variable. Its coefficients depend on $\uun{y}$, but
since $\uun{y}$ is independent of $u$ and $v$, this notation makes
sense.)

We claim that for any choice of parameters $t(v) > t(u), \un{y}^r,
\un{y}^{r-1},\dots,\un{y}^{j+2}$ there is a unique choice of
$\un{y}^{j+1},\dots,\un{y}^1$ depending smoothly on the parameters
such that the resulting pair of points $(u,v) \in A'$. (In case of $j
= r-1$ there is only a single parameter $t(v)>0$.)

Let us first deal with the case $j < r-1$. Then for each $i$ the
problem of finding $y^{j+1}_i,y^j_i,\dots,y^1_i$ such that $p_i'(t(u))
= p_i''(t(u)) = \dots = p_i^{(j)}(t(u)) = p_i(t(u))-p_i(t(v)) = 0$
holds can be solved independently of each other. In fact the problem
is the same for every $i$, so we fix an arbitrary $i$ and denote
$p_i(t)=p(t) = \lambda_r t^r + \dots + \lambda_1 t$ temporarily.  Let
us write $p(t) = q(t) + \lambda_{j+2}t^{j+2}+\dots+\lambda_r t^r =
q(t) + r(t)$. Since $\lambda_r,\dots,\lambda_{j+2}, t(u)$ and $t(v)$
are fixed parameters, we know the value of
$r(t(u)),r(t(v)),r'(t(u)),r''(t(u)),\dots, r^{(j)}(t(u))$. We have to
find the coefficients of $q$. Let us write $q$ as a Taylor polynomial
around $t(u)$. Then
\begin{equation}~\label{taylor}
q(t) = q(t(u)) + \sum_{i=1}^j q^{(i)}(t(u))\cdot \frac{(t-t(u))^i}{i!}
+ \lambda_{j+1}\cdot(t-t(u))^{j+1} .
\end{equation}
Since $0 = p^{(i)}(t(u)) = q^{(i)}(t(u))+r^{(i)}(t(u))$, in
(\ref{taylor}) the only unknown value is $\lambda_{j+1}$. By
definition \[q(t(v))
- q(t(u)) = p(t(v)) - r(t(v)) + r(t(u))- p(t(u)) = r(t(u)) -
r(t(v)),\]
and hence by substituting $t = t(v)$ in (\ref{taylor}) we get that
\begin{equation*}
\lambda_{j+1} = \left(\frac{1}{t(v)-t(u)}\right)^{j+1} \cdot \left(
r(t(u))-r(t(v))- \sum_{i=1}^j q^{(i)}(t(u))\cdot
\frac{(t-t(u))^i}{i!}\right)
\end{equation*}
As every quantity on the right hand side is fixed and $t(u) > t(v)$ we
find that the parameters uniquely and smoothly determine
$\lambda_{j+1}$. Then all the remaining $\lambda$'s are uniquely and
smoothly determined by the Taylor expansion (\ref{taylor}). Finally to
see what happens on the boundary of the halfspace $t(u) > t(v)$ just
observe, that the vanishing of the derivatives of $p$ at $t(u)$ imply
that $p(t) = p(t(u)) + (t-t(u))^{j+1}\cdot w(t)$ for some polynomial
$w(t)$. Then the equation $p(t(v))=p(t(u))$ is equivalent to
$w(t(v))=0$. Then if $t(v)-t(u)$ converges to 0 the solution will
converge to a $w(t)$ for which $w(t(u))=0$, which is equivalent to
saying that $p^{(j+1)}(t(u))=0$. So the boundary of the halfspace
$t(u) >t(v)$ parametrizes those points $(u,u)$ for which
$p'(t(u))=p''(t(u))=\dots=p^{(j+1)}(t(u))=0$ which is equivalent to $u
\in \Sr{j+1}(F)$.

Now consider the case $j=r-1$. The only parameter is $t(v)$. Let us
suppose that we have a solution $u$ that satisfies all the
equations. Let $i \geq 1$. Then $p_i(t)$ is a degree $r$ polynomial
for which the first $r$ derivatives vanish at $t(u)$. Thus $p_i = c_i
\cdot (t-t(u))^r$. Further we know that $p_i(t(v)) = p_i(t(u)) = 0$
while $t(v) > t(u)$. This is only possible if $c_i = 0$. So all the
$p_i$'s must be identically 0, except for $p_0$. Let us temporarily
denote $p_0(t)=p(t) = t^{r+1} + \lambda_{r-1}t^{r-1} + \dots +
\lambda_1 t$. The constraints on the derivatives imply that
\[p(t) =
p(t(u))+p^{(r)}(t(u))\cdot \frac{(t-t(u))^r}{r!} + (t-t(u))^{r+1}.\]
The polynomial $p$ has no $x^r$ term by definition, so
$p^{(r)}(t(u))=r!(r+1)t(u)$, and so
\[ p(t) = p(t(u))+(t+r\cdot t(u))(t-t(u))^r.\]
Finally
\[
p(t(u)) = p(t(v)) = p(t(u))+(t(v)+r\cdot t(u))(t(v)-t(u))^r,\]
so $t(u) = -t(v)/r$, and $p(0)=0$ determines $p(t(u))$.
Thus indeed for any $t(v)>0$ there is a unique solution $u$, this
solution is smoothly parametrized by $t(v)$, and the boundary $t(v)=0$
goes to the only $\Sr{r}$-point, the origin.
\end{proof}

\end{document}